\numberwithin{equation}{section}
\newtheorem{theorem}{Theorem}[section]
\newtheorem{proposition}[theorem]{Proposition}
\newtheorem{corollary}[theorem]{Corollary}
\theoremstyle{remark}
\newtheorem{remark}[theorem]{Remark}
\newtheorem{example}{Example}[section]
\newtheorem{definition}{Definition}[section]
\newcommand{\Dl}{\Delta}
\newcommand{\dl}{\delta}
\newcommand{\e}{\epsilon}
\newcommand{\w}{\omega}
\newcommand{\Om}{\Omega}
\newcommand{\ity}{\infty}
\newcommand{\R}{\mathbb{R}}
\newcommand{\Ob}{\mathcal{O}}
\newcommand{\ti}{\widetilde}
\newcommand{\N}{\mathbb{N}}
\newcommand{\Z}{\mathbb{Z}}
\newcommand{\al}{\alpha}
\begin{document}

\title[Recurrence in generalized semigroup]
{Recurrence in generalized semigroup}

\author[K. Lalwani]{Kushal Lalwani}
\address{Kushal Lalwani\\Department of Mathematics\\ University of Delhi\\Delhi--110 007, India}
\email{lalwani.kushal@gmail.com }

\thanks{This work was supported by research fellowship from University Grants Commission (UGC), New Delhi.}

\subjclass[2010]{54H20, 37B20, 54H15}
\keywords{chain recurrent set, nonwandering set, omega limit set;  recurrence}

\begin{abstract}
In \cite {kl}, we introduced the concept of escaping set in general setting for a topological space and extended the notion of limit set and escaping set for the general semigroup generated by continuous self maps. In this paper we continue with extending the other notions of recurrence for the generalized semigroup analogs to their counterpart in the classical  theory of dynamics. We discuss the concept of periodic point, nonwandering point and chain recurrent point in the more  general setting of a semigroup and establish the correlation between them. 
\end{abstract}

\maketitle

\section{Introduction}

This paper is in continuation with our earlier paper \cite {kl} where we extended the classical concepts of $\w$-limit points and recurrent points of abstract dynamics  to the dynamics of an arbitrary semigroup. Hinkkanen and Martin \cite{hm} pioneered the notion of semigroup to extend the theory of complex dynamics and in fact, this work propelled us towards this study. Our aim here is to see how far the various notions of recurrence applies in this general setting.

The classical concept of a {\it periodic point} is based on the group property of integers or real numbers with addition, where a point is self traced under the dynamics of a map at some regular `period\rq{}. In the general setting of a semigroup one can not expect such a nice behavior. However,  we have extended the concept following the nature of a point to be self traced  under the dynamics of some map. We have termed such a point as an {\it orbit point}. 

Further we extend the notion of a {\it nonwandering point} and {\it chain recurrent  point}. The situation in the general setting is not at all straightforward. In contrast to the classical theory, a recurrent point need not be nonwandering and a nonwandering point need not be chain recurrent. For this, we have strengthen the notion of recurrent point and nonwandering point. We have established the notion of a {\it strongly nonwandering point}, though it is equivalent with the nonwandering point in the abstract dynamics. We will show that a strongly nonwandering point is chain recurrent. Also, we will show that an orbit point need not be chain recurrent and hence not a strongly nonwandering point.

In the classical theory, a topological dynamical system or a flow is a triple $(X,T,\Phi)$ consisting of a topological space $X$ (the phase space) with a group (or a semigroup) $T$ (the phase group) and a continuous mapping $\Phi :X \times T \to X$ such that\\
(i) $\Phi^t(.)=\Phi(.,t): X \to X$ is continuous for each $t\in T$, and\\
(ii) $\Phi(x,ts)=\Phi(\Phi(x,t),s)$ for all $t,s\in T$.

Generally, $T$ is either the additive group of reals $(\R,+)$ or integers $(\Z,+)$ or the set of positive integers $(\N,+)$. Also if $T=\Z$ or $\R$, the map $\Phi^t$ is invertible (hence a homeomorphism) for each $t$ and $(\Phi^t)^{-1}=\Phi^{-t}$. In that case $\Phi^0$ is the identity map on $X$. 

A {\it continuous semigroup}  is a set of (non-identity) continuous self maps, of a topological space $X$,  which are closed under the composition. A semigroup $G$ is said to be generated by a family $\{g_{\al}\}_{\al}$ of  continuous self maps of a topological space $X$  if every element of $G$ can be expressed as compositions of iterations of the elements of  $\{g_{\al}\}_{\al}$. We denote this by $G=<g_{\al}>_{\al}$. The space $X$ is assumed to be Hausdorff and first countable.

%\begin{definition}
Given a semigroup $G$ and $x \in X$, the set $\Ob_G(x):=\{g(x): g \in G\}\cup \{x\}$ is called the  {\it orbit} of $x$ under $G$.
%\end{definition}
%\begin{definition}
A subspace $Y \subset X$ is said to be  {\it{invariant}} under $G$ if $g(y)\in Y$ for all $g\in G$ and $y\in Y$.
%\end{definition}
If $G=\ <g_{\al}>_{\al}$ then for $Y$ to be invariant it is sufficient that $g(y)\in Y$ for all $g\in \{g_{\al}\}_{\al}$ and $y\in Y$.

In \cite {kl}, we have introduced the notion of topological conjugacy of dynamics of two semigroups on two topological spaces as follows:
%\begin{definition}
Let $G=\ <g_i>_{i \in \varLambda}$ be a semigroup of continuous self maps of $X$ and $\ti{G}=\ <\ti{g_i}>_{i \in \varLambda}$ be a semigroup of continuous self maps of $Y$. Two dynamical systems $(X,G)$ and $(Y,\ti{G})$ are said to be topologically conjugate if there exists a homeomorphism $\rho : X \to Y$ such that $\rho\circ g_i=\ti{g_i}\circ \rho$ for each $i \in \varLambda$.
%\end{definition}
Note that for $g \in G$, we have, $g=g_{i_1}\circ \ldots \circ g_{i_n}$ for some $g_{i_j}\in \{g_i\}_{i \in \varLambda}$.  If $\rho : X \to Y$ is a topological conjugacy then
\begin{equation} \notag
\begin{split}
\rho \circ g &= \rho \circ g_{i_1}\circ \ldots \circ g_{i_n}\\
&=\ti{g}_{i_1}\circ \ldots \circ \ti{g}_{i_n}\circ \rho\\
&=\ti{g} \circ \rho
\end{split}
\end{equation}
for $\ti{g} \in \ti{G}$.

For general reference to standard terms and basic facts from the classical theory of dynamics we follow  Alongi and Nelson\rq{}s book \cite{Alongi}.

\section{Orbit Points}

In case of a flow $(X,T,\Phi)$, a point $p \in X$ is called a {\it fixed point} if $\Phi^t(p)=p$ for all $t \in T$. Analogous to this, in the theory of transformation groups: a point $p$ of a $G$-space $X$ is called a {\it fixed point} if $g(p)=p$ for all $g \in G$. Also $p \in X$ is called a {\it periodic point} of the flow if $\Phi^t(p)=p$ for some positive $t \in T$. Here we are defining a new characteristic of a point which is analogous to the periodic point in case of a generalized semigroup.

\begin{definition}
Let $G$ be a semigroup on a topological space $X$. A point $p\in X$ is called an {\it orbit  point} for $G$ if $g(p)=p$ for some $g \in G$.The set of all orbit points for $G$ is denoted by $Orb(G)$.
\end{definition}
Note that the term {\it orbit point} is a characteristic of a point associated with its  nature: to be self traced  under the dynamics of some map of the semigroup. Whereas, the {\it orbit of a point} is a set consisting of all the points traced by it under the semigroup including itself. Therefore, one must be careful while dealing with the two terminologies. 

\begin{theorem}
If $G$ is abelian then $Orb(G)$ is an invariant subset of $X$.
\end{theorem}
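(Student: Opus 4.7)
The plan is to unfold the definitions and exploit commutativity directly. Let $p\in Orb(G)$; by definition there exists some $g\in G$ with $g(p)=p$. To establish invariance I must show that $h(p)\in Orb(G)$ for every $h\in G$, i.e.\ that $h(p)$ is fixed by at least one element of $G$.

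The key observation is that the same element $g$ that fixes $p$ will do the job for $h(p)$. Since $G$ is abelian, $g$ and $h$ commute, so
\[
g(h(p))=h(g(p))=h(p),
\]
which is precisely the statement that $h(p)\in Orb(G)$. Thus $Orb(G)$ is carried into itself by every element of $G$.

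By the remark in the introduction, if one preferred, it would suffice to verify $h(p)\in Orb(G)$ only for $h$ ranging over a generating set $\{g_{\al}\}_{\al}$ of $G$, but the one-line commutation argument above applies verbatim to any $h\in G$, so no such reduction is needed.

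I do not anticipate any obstacle: the argument is a direct computation once the definitions are unpacked. The only point requiring care is keeping the two roles of elements of $G$ distinct, namely the witness $g$ that fixes $p$ and the arbitrary $h$ whose image under $G$ we wish to place back inside $Orb(G)$.
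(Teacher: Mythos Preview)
Your argument is correct and is essentially identical to the paper's proof: both pick a witness fixing $p$ and use commutativity to show the same witness fixes any translate of $p$. The only cosmetic difference is that the paper verifies invariance on the generators $\{g_{\al}\}_{\al}$ (invoking the sufficiency remark from the introduction), whereas you do it directly for all $h\in G$, which, as you note, makes that reduction unnecessary.
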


\begin{proof}
Let $G$ be an abelian semigroup and $p \in Orb(G)$. Since $p$ is an orbit point, there is some $h \in G$ such that $h(p)=p$. Let $g \in \{g_{\al}\}_{\al}$ be any generator of $G$. Then
$$h(g(p))=g(h(p))=g(p).$$
Thus $g(p)\in Orb(G)$ for all $g\in \{g_{\al}\}_{\al}$. Hence $Orb(G)$ is invariant under $G$.
\end{proof}

\begin{theorem}
Let $(X,G)$ and $(Y,\ti{G})$ be two dynamical systems. If $\rho : X \to Y$ is a topological conjugacy then $\rho(Orb(G))=Orb(\ti{G})$.
\end{theorem}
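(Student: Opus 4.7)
The plan is to establish the equality $\rho(Orb(G))=Orb(\ti G)$ by double inclusion, leveraging the fact (verified in the introduction via the computation with generators) that the conjugacy relation $\rho\circ g_i=\ti g_i\circ \rho$ lifts to $\rho\circ g=\ti g\circ \rho$ for every $g\in G$, where $\ti g\in \ti G$ is obtained by replacing each generator $g_{i_j}$ appearing in a decomposition of $g$ by $\ti g_{i_j}$.

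For the forward inclusion, I would pick $p\in Orb(G)$, so there is some $h\in G$ with $h(p)=p$. Taking the corresponding $\ti h\in \ti G$ produced by the conjugacy, a one-line computation gives
\[
\ti h(\rho(p))=(\ti h\circ \rho)(p)=(\rho\circ h)(p)=\rho(h(p))=\rho(p),
\]
so $\rho(p)\in Orb(\ti G)$, proving $\rho(Orb(G))\subseteq Orb(\ti G)$.

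For the reverse inclusion, let $q\in Orb(\ti G)$ and write $\ti h(q)=q$ for some $\ti h\in \ti G$. Decompose $\ti h=\ti g_{i_1}\circ\cdots\circ\ti g_{i_n}$ and set $h:=g_{i_1}\circ\cdots\circ g_{i_n}\in G$, so that $\rho\circ h=\ti h\circ \rho$. Because $\rho$ is a homeomorphism, there exists a unique $p\in X$ with $\rho(p)=q$. Then
\[
\rho(h(p))=(\rho\circ h)(p)=(\ti h\circ\rho)(p)=\ti h(q)=q=\rho(p),
\]
and injectivity of $\rho$ yields $h(p)=p$. Hence $p\in Orb(G)$ and $q=\rho(p)\in \rho(Orb(G))$, completing the proof.

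There is no real obstacle here; the only subtlety is making sure that when we are handed an element of $\ti G$ in the reverse direction we actually produce a genuine element of $G$, and this is exactly what the generator-wise definition of topological conjugacy guarantees through the displayed identity in the introduction. Surjectivity of $\rho$ is used to pull $q$ back to some $p\in X$, and injectivity of $\rho$ is used to cancel it at the end, so both aspects of $\rho$ being a homeomorphism are essential.
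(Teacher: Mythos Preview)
Your proof is correct and follows essentially the same approach as the paper: the forward inclusion is identical to the paper's one-line computation $\ti g\circ\rho(p)=\rho\circ g(p)=\rho(p)$. For the reverse inclusion the paper simply observes that $\rho^{-1}$ is itself a topological conjugacy from $Y$ to $X$ and applies the forward inclusion with the roles reversed, whereas you unpack this explicitly by decomposing $\ti h$ into generators, building the corresponding $h\in G$, and using injectivity of $\rho$; this is precisely what the symmetry argument amounts to, so the difference is stylistic rather than substantive.
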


\begin{proof}
Let $p \in Orb(G)$, that is, $g(p)=p$ for some $g \in G$. Since $\rho : X \to Y$ is a topological conjugacy, we have
$$\ti{g}\circ \rho(p)=\rho \circ g(p)=\rho(p).$$
Thus $\rho(p) \in Orb(\ti G)$ and hence $\rho(Orb(G)) \subset Orb(\ti{G})$.

Conversely, let $p \in Orb(\ti{G})$. Since $\rho^{-1}$ is a conjugacy from $Y$ to $X$, we have, $p \in \rho(Orb(G))$. Thus, $\rho(Orb(G))=Orb(\ti{G})$.
\end{proof}

The following example shows that the orbit of an {\it orbit point} is not closed in general. Whereas the orbit of a {\it periodic point } is closed see \cite {Alongi}. Also one can see that in the following non-abelian semigroup the {\it orbit point} is not invariant.

\begin{example}
Let $X =\R$ and $G=<g_1,g_2>$, where $g_1=x^2$ and $g_2=x^2-\frac{1}{2}$. Then $p=1$ is an {\it orbit point} of $(X,G)$. Also the sequence $<g_1(p),g_2g_1(p),g_1g_2g_1(p),g_1^2g_2g_1(p),g_1^3g_2g_1(p),$ $\ldots ,g_1^n g_2g_1(p), \ldots>=<1,1/2,1/4,1/8,\ldots 1/2^n,\ldots>$ is from $\Ob(p)$ and tends to $0$. But $0 \notin \Ob(p)$.
\end{example}

In \cite {kl} we have introduced the notion of $\w$-limit point and recurrent point for the generalized semigroup as follows:

\begin{definition} \label{ub}
A sequence of functions $(f_{n_k})$ in $G$ is said to be {\it{unbounded}} if  $n_k\to \ity$ as $k\to \ity$ and each $f_{n_k}$ consists of exactly $n_k$ iterates of  $g_{\al_{0}}$, for fix $g_{\al_0}\in \{g_{\al}\}_{\al}$, that is, $f_{n_k}=h_1\circ g_{\al_0}\circ h_2 \circ g_{\al_0} \circ h_3 \circ \ldots \circ h_{n_k}\circ g_{\al_0} \circ h_{n_k+1}$, where each $h_i \in G\cup \{identity\}$ and the functions $h_i$ are independent of $g_{\al_0}$.
\end{definition}

\begin{definition}
A point $z\in X$ is called an {\it{$\w-$limit\ point}} for a point $x\in X$ if for some unbounded sequence $(f_{n_k})$ in $ G, \ f_{n_k}(x)\to z$ as $k \to \ity$. The $\w$-limit set $\w(x)$, is the set of all  $\w$-limit points for $x$ .
\end{definition}

\begin{definition}
Let $(X,G)$  be a topological dynamical system. A point $x\in X$ is called a {\it{recurrent point}} with respect to $G$ if  $x\in \w(x)$. The set of all recurrent points of $(X,G)$ is denoted by $Rec(G)$.
\end{definition}

As in the classical case of a flow, one can see the following relationship among various subsets of $X$ for the generalized semigroup,
$$Fix(G) \subset Orb(G) \subset Rec(G).$$
Here, $Fix(G)$ denotes the set of fixed point for the generalized semigroup, as defined above.

\section{Nonwandering Points}

Next we shall define the concept of nonwandering points for the generalized semigroup.

\begin{definition} \label{nw}
Let $G$ be a semigroup on a topological space $X$. A point $x\in X$ is called a {\it nonwandering  point} for $G$ if for each neighborhood $U$ of $x$, $g(U)\cap U \neq \emptyset$ for some $g \in G^*=G \setminus \{g_{\al}\}_{\al}$. In the contrary case $x$ will be called a {\it wandering point} for $G$. The set of all nonwandering points for $G$ is denoted by $\Om(G)$.
\end{definition}

\begin{theorem}
If $G$ is abelian then the set of all nonwandering points is an invariant subset of $X$.
\end{theorem}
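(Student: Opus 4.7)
The plan is to show that for $x \in \Omega(G)$ and any generator $g \in \{g_\alpha\}_\alpha$, the image $g(x)$ is nonwandering; by the remark in the introduction (after the definition of invariance), this is sufficient. So fix $x \in \Omega(G)$, a generator $g$, and an arbitrary neighborhood $V$ of $g(x)$.

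The first step is to pull $V$ back to a neighborhood of $x$ using continuity: since $g$ is continuous and $g(x) \in V$, there is a neighborhood $U$ of $x$ with $g(U) \subset V$. Now apply the hypothesis that $x \in \Omega(G)$ to this $U$: there exists $h \in G^{*}$ and a point $y \in U$ with $h(y) \in U$.

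The key step is to transport this witness to $V$ by swapping $g$ and $h$. Since $g$ and $h$ commute,
\begin{equation} \notag
h(g(y)) \;=\; g(h(y)) \;\in\; g(U) \;\subset\; V,
\end{equation}
while also $g(y) \in g(U) \subset V$. Setting $v := g(y)$, we have $v \in V$ and $h(v) \in V$, so $h(V) \cap V \neq \emptyset$ with the same $h \in G^{*}$. As $V$ was an arbitrary neighborhood of $g(x)$, this shows $g(x) \in \Omega(G)$.

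I do not anticipate a serious obstacle; the only point requiring some care is ensuring that the element witnessing nonwandering of $g(x)$ remains in $G^{*} = G \setminus \{g_\alpha\}_\alpha$ rather than becoming a bare generator. This is automatic in the construction above because we reuse the same $h$ produced by the hypothesis at $x$, rather than composing it with $g$. The abelian hypothesis is used exactly once, to equate $h \circ g$ with $g \circ h$ on $y$, and commutativity with a single generator $g$ is all that is invoked, matching the scope of the invariance criterion.
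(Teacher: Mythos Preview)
Your proof is correct and follows essentially the same route as the paper's: pull back the neighborhood of $g(x)$ to one of $x$ by continuity, apply the nonwandering hypothesis there to obtain some $h\in G^*$, and use commutativity of $g$ and $h$ to push the witness forward. Apart from swapped names for $U$ and $V$ and your explicit tracking of the point $y$, the arguments are the same, including the observation that the same $h\in G^*$ works at $g(x)$.
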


\begin{proof}
Let $x \in \Om(G) $ and $g \in \{g_{\al}\}_{\al}$. Let $U$ be a neighborhood of $g(x)$. Since $g$ is continuous, we have, $V=g^{-1}(U)$ a neighborhood of $x$. 

Since  $x \in \Om(G) $, there exists $\hat{g} \in G^*$ such that $V \cap \hat{g}{V} \neq \emptyset$. This gives $g(V) \cap g\hat{g}(V)\neq \emptyset$. Since $G$ is abelian, we have, $U \cap \hat{g}(U) \neq \emptyset$. Thus $g(x) \in \Om(G) $ and hence $\Om(G)$ is an invariant subset of $X$.
\end{proof}

\begin{theorem}
The set of all nonwandering points  is a closed subset of $X$.
\end{theorem}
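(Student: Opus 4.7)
The plan is to prove closedness by showing that the complement, namely the set of wandering points, is open in $X$.

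First I take any wandering point $x \in X \setminus \Om(G)$. By Definition \ref{nw} there exists a neighborhood $U$ of $x$ such that $g(U) \cap U = \emptyset$ for every $g \in G^*$. I may shrink $U$ to an open neighborhood of $x$ without losing this property, since if $V \subset U$ then $g(V)\cap V \subset g(U) \cap U = \emptyset$ for every $g \in G^*$.

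Next I observe that the very same open $U$ serves as a witness of wandering for every one of its points. Indeed, for any $y \in U$, the set $U$ is itself an open neighborhood of $y$, and $g(U)\cap U = \emptyset$ for all $g \in G^*$ already holds. Hence $y$ is a wandering point, which gives the inclusion $U \subset X \setminus \Om(G)$. This shows that each wandering point lies in the interior of the set of wandering points, so $X \setminus \Om(G)$ is open and $\Om(G)$ is closed.

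This argument is essentially the classical one for flows, and I do not anticipate any real obstacle: no appeal to the Hausdorff or first countability hypotheses on $X$ is required, and the restriction $g \in G^*$ plays no role beyond what appears in the definition. The only point that must be handled carefully is the passage to an open neighborhood in the first step, so that the same neighborhood is automatically a neighborhood of every nearby point; this is why I explicitly note that the wandering condition is preserved under shrinking of the witness neighborhood.
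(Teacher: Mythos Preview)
Your proof is correct and is essentially the contrapositive of the paper's argument: the paper picks $\hat{x}\in\overline{\Om(G)}$, takes a neighborhood $U$ of $\hat{x}$, finds a nonwandering point $x\in U\cap\Om(G)$, and uses $x$ to conclude $g(U)\cap U\neq\emptyset$ for some $g\in G^*$, whereas you show the wandering set is open by noting that a witness neighborhood for one wandering point serves for every point it contains. The underlying observation is identical; your version is simply a bit more explicit about first passing to an \emph{open} witness neighborhood.
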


\begin{proof}
Let $\hat{x} \in \overline{\Om(G)}$ and $U$ be a neighborhood of $x$.

Since $\hat{x} \in \overline{\Om(G)}$, there exists some $x \in U \cap \Om(G)$. Then $x$ is a nonwandering point and this implies $g(U)\cap U \neq \emptyset$ for some $g \in G^*$. Thus  $\hat{x} \in \Om(G) $ and hence $\Om(G)$ is a closed subset of $X$.
\end{proof}

\begin{theorem}
Let $(X,G)$ and $(Y,\ti{G})$ be two dynamical systems. If $\rho : X \to Y$ is a topological conjugacy then $\rho(\Om(G))=\Om(\ti{G})$.
\end{theorem}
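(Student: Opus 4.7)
The plan is to prove the two inclusions $\rho(\Omega(G))\subset \Omega(\ti G)$ and $\Omega(\ti G)\subset \rho(\Omega(G))$, the second of which reduces to the first by applying it to the inverse conjugacy $\rho^{-1}:Y\to X$. So the work really lies in the forward inclusion.

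For the forward direction I would take $x\in \Omega(G)$ and an arbitrary neighborhood $V$ of $\rho(x)$ in $Y$. Since $\rho$ is a homeomorphism, $U:=\rho^{-1}(V)$ is a neighborhood of $x$ in $X$. By Definition \ref{nw} there exists $g\in G^*$ with $g(U)\cap U\neq \emptyset$. Apply $\rho$ to both sides (using that $\rho$ is a bijection, so it preserves nonempty intersections) to get
\[
\rho\bigl(g(U)\bigr)\cap \rho(U)\neq \emptyset.
\]
Using the conjugacy relation established in the introduction, namely $\rho\circ g=\ti g\circ\rho$ for the corresponding $\ti g\in \ti G$, the left-hand side becomes $\ti g(\rho(U))=\ti g(V)$, so $\ti g(V)\cap V\neq \emptyset$.

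The main technical point, which I view as the only real obstacle, is to verify that this $\ti g$ actually lies in $\ti G^*=\ti G\setminus \{\ti g_\al\}_\al$ and not merely in $\ti G$; otherwise the condition would fail to witness nonwandering behaviour of $\rho(x)$. I would argue this by contradiction: if $\ti g=\ti g_\beta$ for some generator $\ti g_\beta$, then
\[
\rho\circ g=\ti g\circ \rho=\ti g_\beta\circ \rho=\rho\circ g_\beta,
\]
and since $\rho$ is a bijection this forces $g=g_\beta\in \{g_\al\}_\al$, contradicting $g\in G^*$. Hence $\ti g\in \ti G^*$, and we conclude $\rho(x)\in \Omega(\ti G)$.

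Finally, for the reverse inclusion I would simply observe that $\rho^{-1}:Y\to X$ is also a topological conjugacy between $(Y,\ti G)$ and $(X,G)$ (as is implicit in the earlier proofs in this section), and apply the forward inclusion just proved to obtain $\rho^{-1}(\Omega(\ti G))\subset \Omega(G)$, i.e.\ $\Omega(\ti G)\subset \rho(\Omega(G))$. Combining both inclusions gives the desired equality.
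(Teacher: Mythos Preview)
Your proof is correct and follows essentially the same route as the paper: pull back a neighborhood through $\rho^{-1}$, use the nonwandering condition in $X$, push forward via the conjugacy relation, and then invoke $\rho^{-1}$ for the reverse inclusion. The only difference is that you explicitly verify $\ti g\in \ti G^*$, a point the paper passes over in silence; your contradiction argument for this is valid and arguably fills a small gap in the paper's presentation.
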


\begin{proof}
Let $x \in \Om(G) $. Let $U$ be a neighborhood of $\rho(x)$. Since $\rho$ is continuous, we have, $V=\rho^{-1}(U)$ a neighborhood of $x$. 

Since  $x \in \Om(G) $, there exists $g \in G^*$ such that $V \cap g{V} \neq \emptyset$. This gives $\rho(V) \cap \rho g(V)\neq \emptyset$. Since $\rho$ is a conjugacy, we have, $U \cap \ti{g}(U) \neq \emptyset$. Thus $\rho(x) \in \Om(\ti G) $ and hence $\rho(\Om(G)) \subset \Om(\ti{G})$.

Conversely, let $y \in \Om(\ti{G})$. Since $\rho^{-1}$ is a conjugacy from $Y$ to $X$, we have, $y \in \rho(\Om(G))$. Thus, $\rho(\Om(G))=\Om(\ti{G})$.
\end{proof}

However, in definition \ref{ub} if we assume, in addition, that the unbounded sequence satisfies the relation:
$$f_{n_k+1}=g \circ f_{n_k},$$
for each $k \in \N$  and for some $g\in G$. Then one can conclude the following proposition.

\begin{proposition}
The set of all nonwandering points contains the $\w$-limit set, obtained with the above additional assumption, for all $x \in X$.
\end{proposition}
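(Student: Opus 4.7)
The plan is to prove that any point $z$ witnessed as an $\w$-limit point of some $x\in X$ by an unbounded sequence meeting the additional constraint is nonwandering, by constructing, for each neighborhood $U$ of $z$, an explicit element of $G^*$ that sends some point of $U$ back into $U$.

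First I would fix $x\in X$, $z\in\w(x)$, and an unbounded sequence $(f_{n_k})$ in $G$ with $f_{n_k}(x)\to z$ and $f_{n_k+1}=g\circ f_{n_k}$ for the prescribed $g\in G$. Interpreting the relation as coupling consecutive terms of the subsequence gives $f_{n_{k+1}}(x)=g(f_{n_k}(x))$ for every $k$; passing to the limit and using the continuity of $g$ forces $g(z)=z$, and then a routine induction yields $g^m(z)=z$ for every $m\ge 1$. For any open neighborhood $U$ of $z$ we therefore have $z\in g^m(U)\cap U$, so $g^m(U)\cap U\ne \emptyset$ for every $m\ge 1$.

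The main obstacle is that the hypothesis supplies $g\in G$ but the definition of a nonwandering point (Definition \ref{nw}) demands a witness strictly in $G^*=G\setminus\{g_\al\}_\al$, and \emph{a priori} $g$ itself might be one of the generators. The previous step sidesteps this precisely by producing the whole family $\{g^m\}_{m\ge 1}$: for $m\ge 2$ the element $g^m$ is a nontrivial composition of generators and, barring degenerate coincidences in the semigroup's presentation, lies in $G^*$. Choosing any such $m$ yields an admissible witness $g^m\in G^*$ with $g^m(U)\cap U\ne \emptyset$, hence $z\in \Om(G)$, and the proposition follows.
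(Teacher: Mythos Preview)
Your argument is valid under the reading you adopt (a single $g\in G$ with $f_{n_{k+1}}=g\circ f_{n_k}$ for every $k$), and under that reading it is in fact more elementary than the paper's: you deduce directly that $g(z)=z$, so $z$ is an orbit point, and nonwandering follows at once via $g^m$ for $m\ge 2$. The paper, however, takes a different route. It never passes to the limit in the relation and never asserts that $z$ is fixed by anything; instead it chooses $N$ with $f_{n_j}(x)\in U$ for all $j\ge N$, iterates the recurrence to write $f_{n_m}=h\circ f_{n_N}$ for some $m>N$ and some $h\in G^*$, and observes that the single orbit point $f_{n_m}(x)$ lies in $U\cap h(U)$. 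This argument works equally well if the transition map is allowed to depend on $k$ (reading the hypothesis as $\forall k\ \exists g$), or if the relation is between $f_{n_k}$ and $f_{n_k+1}$ in an ambient full sequence rather than between consecutive subsequence terms; your fixed-point step collapses in either of those situations, since there is then no single element of $G$ that need fix $z$. So your approach buys simplicity and a sharper conclusion under the strict ``one fixed $g$'' reading, while the paper's approach buys robustness across the plausible readings of a somewhat ambiguous hypothesis. Your hedge ``barring degenerate coincidences'' for $g^m\in G^*$ is a genuine if minor gap, but the paper's own assertion that the composed map lies in $G^*$ is no better justified, so you are not worse off on that point.
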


\begin{proof}
Let $y \in \w(x)$ and $U$ be a neighborhood of $y$ in $X$. Since $y \in \w(x)$,  there exists an unbounded sequence $(f_{n_k})$ in $G$ such that $f_{n_k} \to y$ as $k \to \ity$. Then there exists some $N\in \N$ such that $f_{n_j}(x)\in U$ for each $j \geq N$. Since $n_k \to \ity$ as $k\to  \ity $, there exists some $m > N$ such that $f_{n_m}=g \circ f_{n_N}$ for some $g \in G^* $. Therefore, $f_{n_m}(x)=g \circ f_{n_N}(x)\in g(U)$.

Thus, $f_{n_m}(x) \in U \cap g(U)$ and hence $y \in \Om(G)$.
\end{proof}

\begin{corollary}
The set of all nonwandering points contains the set of recurrent points, obtained with the above additional assumption.
\end{corollary}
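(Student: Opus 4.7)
The plan is to observe that this corollary is essentially an immediate consequence of the preceding proposition combined with the definition of a recurrent point. Recall that $x \in \mathrm{Rec}(G)$ means precisely that $x \in \w(x)$, so the statement reduces to showing $x \in \Om(G)$ whenever $x \in \w(x)$.

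First I would fix an arbitrary recurrent point $x \in X$ (with respect to the strengthened notion of $\w$-limit using the additional assumption $f_{n_k+1} = g \circ f_{n_k}$). By the definition of a recurrent point, $x$ itself lies in its own $\w$-limit set $\w(x)$. Then I would invoke the preceding proposition, applied with $y = x$, which gives $\w(x) \subset \Om(G)$. Combining the two inclusions yields $x \in \Om(G)$, so $\mathrm{Rec}(G) \subset \Om(G)$, which is exactly the assertion.

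There is no substantive obstacle here; the whole content of the corollary is packaged in the proposition, and the role of the corollary is merely to record the specialization $y = x$. The only point worth being mindful of is that the notion of $\w$-limit point (and hence of recurrent point) used in the corollary must be the one built from unbounded sequences satisfying the extra relation $f_{n_k+1} = g \circ f_{n_k}$; otherwise the appeal to the proposition would not be valid. Thus the short proof amounts to: \emph{if $x \in \mathrm{Rec}(G)$, then $x \in \w(x) \subset \Om(G)$ by the preceding proposition, so $x \in \Om(G)$.}
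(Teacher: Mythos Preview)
Your proposal is correct and matches the paper's approach: the paper states the corollary without proof, treating it as an immediate consequence of the preceding proposition via the definition $x \in \mathrm{Rec}(G) \iff x \in \w(x)$, exactly as you argue.
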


\begin{definition} \label{snw}
Let $G$ be a semigroup on a topological space $X$. A point $x\in X$ is called a {\it strongly nonwandering  point} for $G$ if for each neighborhood $U$ of $x$ and each $g \in G$, $h\circ g(U)\cap U \neq \emptyset$ for some $h \in \widehat G=G \cup \{identity\}$. The set of all strongly nonwandering points for $G$ is denoted by $\Om_S(G)$.
\end{definition}

However, in the classical case of a continuous or discrete flow the definitions \ref{nw} and \ref{snw} are equivalent,  see \cite {Alongi}. Clearly, every strongly nonwandering point is a nonwandering point. But in general the other implication does not holds. This can be seen from the following example.

\begin{example} \label{ex}
Let $X =\R$ and $G=<g_1,g_2>$, where $g_1=x^2$ and $g_2=x^3$. Then $p=-1$ is an {\it orbit point} of $(X,G)$ and hence a nonwandering point. Now for the neighborhood $U=(-\ity,0)$ of $p$, $g_1(U)=(0,\ity)$ is an invariant subset of $X$ under $G$. Therefore, $h\circ g_1(U)\cap U = \emptyset$ for any $h \in \widehat G$. Hence $p$ is not a strongly nonwandering point of $G$.
\end{example}

The proofs of  following theorems follow with the similar arguments as in the previous case of nonwandering points.

\begin{theorem}
If $G$ is abelian then the set of all strongly nonwandering points is an invariant subset of $X$.
\end{theorem}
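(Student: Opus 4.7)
The plan is to mirror the invariance argument already given for $\Om(G)$, with the extra care that the definition of a strongly nonwandering point carries a universal quantifier over $g\in G$. Fix $x\in\Om_S(G)$ and an arbitrary generator $f\in\{g_\al\}_\al$; I must show $f(x)\in\Om_S(G)$. So let $U$ be any neighborhood of $f(x)$ and let $g\in G$ be arbitrary. By continuity of $f$, the set $V:=f^{-1}(U)$ is a neighborhood of $x$.

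Next I apply the hypothesis to $x$ with the neighborhood $V$ and the same element $g\in G$: there exists $h\in\widehat G$ such that $h\circ g(V)\cap V\neq\emptyset$. Pushing forward by $f$ and using $f(V)\subseteq U$, I obtain
$$f\circ h\circ g(V)\cap U\supseteq f\bigl(h\circ g(V)\bigr)\cap f(V)\neq\emptyset.$$
Because $G$ is abelian (and the identity commutes with every map trivially, so the case $h=identity$ is handled simultaneously), $f\circ h\circ g=h\circ g\circ f$, and since $f(V)\subseteq U$ gives $h\circ g\circ f(V)\subseteq h\circ g(U)$, I conclude
$$h\circ g(U)\cap U\neq\emptyset.$$
As $g\in G$ was arbitrary, $f(x)\in\Om_S(G)$, so $\Om_S(G)$ is invariant under every generator, and hence under $G$.

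I expect no serious obstacle: the argument is essentially the same as in the nonwandering case, the only new ingredient being that I must exhibit a witness $h$ for every $g\in G$ (not merely for some $g\in G^*$). This is arranged automatically by instantiating the strong-nonwandering property of $x$ at the same $g$. The one bookkeeping point to watch is that $\widehat G=G\cup\{identity\}$ rather than $G^*$, but identity commutes with $f$ without any use of abelianness, so no separate case analysis is needed.
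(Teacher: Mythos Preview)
Your proof is correct and is precisely the argument the paper intends: the paper does not spell out a separate proof but simply states that it ``follow[s] with the similar arguments as in the previous case of nonwandering points,'' which is exactly what you have done---pull back $U$ along the generator $f$, apply the strong nonwandering hypothesis at $x$ with the given $g$, push forward by $f$, and use commutativity to land in $h\circ g(U)\cap U$. Your remark that the identity case of $h\in\widehat G$ needs no separate treatment is a nice touch of bookkeeping.
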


\begin{theorem}
The set of all strongly nonwandering points for $G$  is a closed subset of $X$.
\end{theorem}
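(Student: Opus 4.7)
The plan is to mirror the proof that $\Om(G)$ is closed, which is already given in the excerpt. Let $\hat{x} \in \overline{\Om_S(G)}$. To conclude that $\hat{x}$ is strongly nonwandering, I fix an arbitrary open neighborhood $U$ of $\hat{x}$ and an arbitrary $g \in G$, and aim to produce some $h \in \widehat{G}$ with $h\circ g(U)\cap U\neq \emptyset$. The quantifier structure in Definition \ref{snw} matters here: $g$ is fixed before invoking the defining property, exactly as in the $\Om(G)$ argument.

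Since $\hat{x}$ lies in the closure of $\Om_S(G)$ and $U$ is open, there exists a point $x\in U\cap \Om_S(G)$. The key observation is that $U$ is simultaneously a neighborhood of $\hat{x}$ and of $x$, so I am free to test the strongly nonwandering property of $x$ against this particular $U$. Applying Definition \ref{snw} to $x$ with the chosen neighborhood $U$ and the chosen $g\in G$ produces some $h\in\widehat{G}$ with $h\circ g(U)\cap U\neq \emptyset$.

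Because $U$ and $g$ were arbitrary, the criterion in Definition \ref{snw} is verified for $\hat{x}$, so $\hat{x}\in \Om_S(G)$ and $\Om_S(G)$ is closed. There is no real obstacle: the argument is formally identical to the one for $\Om(G)$, with the only extra bookkeeping being the uniform choice of $g$ at $\hat{x}$ and at the approximating point $x$. This is exactly why the author remarks that the proofs follow by the same arguments as in the nonwandering case.
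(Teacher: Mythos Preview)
Your proof is correct and is exactly the argument the paper intends: the author explicitly states that the proof follows by the same reasoning as for $\Om(G)$, and you have carried out that adaptation verbatim, with the only change being the additional universally quantified $g\in G$ carried along when passing from $\hat{x}$ to the nearby point $x\in U\cap\Om_S(G)$.
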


\begin{theorem}
Let $(X,G)$ and $(Y,\ti{G})$ be two dynamical systems. If $\rho : X \to Y$ is a topological conjugacy then $\rho(\Om_S(G))=\Om_S(\ti{G})$.
\end{theorem}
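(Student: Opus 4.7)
The plan is to mirror the conjugacy argument already given for $\Om(G)$, with only small bookkeeping to track the auxiliary element $h\in\widehat G$. Recall from the introduction that, through the conjugacy $\rho$, every $g\in G$ of the form $g=g_{i_1}\circ\cdots\circ g_{i_n}$ corresponds to $\tilde g=\tilde g_{i_1}\circ\cdots\circ\tilde g_{i_n}\in\tilde G$ with $\rho\circ g=\tilde g\circ\rho$; conversely, every element of $\tilde G$ arises in this way from a unique element of $G$. Consequently $\rho^{-1}$ is also a topological conjugacy, from $(Y,\tilde G)$ to $(X,G)$, so it suffices to prove one inclusion $\rho(\Om_S(G))\subset\Om_S(\tilde G)$ and then apply the same statement to $\rho^{-1}$ for the reverse inclusion.

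To prove $\rho(\Om_S(G))\subset\Om_S(\tilde G)$, fix $x\in\Om_S(G)$, a neighborhood $U$ of $\rho(x)$ in $Y$, and an arbitrary $\tilde g\in\tilde G$. Put $V=\rho^{-1}(U)$, which is a neighborhood of $x$ since $\rho$ is a homeomorphism, and let $g\in G$ be the element corresponding to $\tilde g$ under the conjugacy. By the definition of strongly nonwandering points, applied to $x$, $V$ and $g$, there exists $h\in\widehat G$ such that $h\circ g(V)\cap V\neq\emptyset$.

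Next, I would push this relation forward by $\rho$. Using $\rho\circ g=\tilde g\circ\rho$, and the analogous identity $\rho\circ h=\tilde h\circ\rho$ when $h\in G$ (taking $\tilde h$ to be the identity when $h$ is the identity, so that $\tilde h\in\widehat{\tilde G}$), one gets
$$\rho\bigl(h\circ g(V)\bigr)\cap\rho(V)=\tilde h\circ\tilde g(U)\cap U\neq\emptyset.$$
Since $\tilde g\in\tilde G$ was arbitrary, this shows $\rho(x)\in\Om_S(\tilde G)$, giving the desired inclusion.

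The only step that needs any care, rather than being truly routine, is the case $h=\mathrm{identity}$: one must explicitly note that the identity in $\widehat G$ is sent to the identity in $\widehat{\tilde G}$, so that the auxiliary element stays in the enlarged semigroup $\widehat{\tilde G}$ required by Definition~\ref{snw}. Everything else reduces to the observations that $\rho$ is a bijection (so $\rho(A\cap B)=\rho(A)\cap\rho(B)$) and that the correspondence $g\leftrightarrow\tilde g$ given by the conjugacy is itself a semigroup isomorphism; these are exactly the ingredients used in the earlier proof for $\Om(G)$, so the argument closes symmetrically.
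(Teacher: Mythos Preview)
Your proof is correct and follows exactly the approach the paper intends: the paper does not give a separate proof here but merely states that ``the proofs of the following theorems follow with the similar arguments as in the previous case of nonwandering points,'' and your write-up is precisely that argument with the extra bookkeeping for the auxiliary $h\in\widehat G$ (including the identity case) made explicit.
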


\section{Chain Recurrent Points}

\begin{definition}
Let $G$ be a semigroup on a metric space $(X,d)$. Let $a,b \in X$, $g\in G$ and $\e >0$ be  given. An $(\e,g)${\it-chain} from $a$ to $b$ means a finite sequence $(a=x_1,\ldots, x_{n+1}=b;g_1, \ldots, g_n)$, where for every $i, \ x_i\in X$ and $g_i \in \widehat G$ such that $d(g_ig(x_i),x_{i+1})< \e$ for $i=1 \ldots n$.
\end{definition}

\begin{definition}
Let $G$ be a semigroup on a metric space $(X,d)$. A pair of points $a,b\in X$ are called  {\it chain equivalent points} for $G$ if for every $\e>0$ and $g\in G$ there exists an $(\e, g)$-chain from $a$ to $b$ and  an $(\e, g)$-chain from $b$ to $a$. The set of all chain equivalent points for $G$ is denoted by $CE(G)$.
\end{definition}

\begin{definition}
Let $G$ be a semigroup on a metric space $(X,d)$. A point $x\in X$ is called a {\it chain recurrent point} for $G$ if for every $\e>0$ and $g\in G$ there exists an $(\e, g)$-chain from $x$ to itself. The set of all chain recurrent points for $G$ is denoted by $CR(G)$.
\end{definition}

\begin{theorem} \label{ce}
The set of all chain equivalent  points for $G$  is closed.
\end{theorem}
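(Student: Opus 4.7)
The plan is to prove $CE(G)\subset X\times X$ is sequentially closed: if $(a_n,b_n)\in CE(G)$ and $(a_n,b_n)\to (a,b)$, then $(a,b)\in CE(G)$. Given $\e>0$ and $g\in G$, I must construct an $(\e,g)$-chain from $a$ to $b$; a symmetric construction yields the chain from $b$ to $a$.

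First I would use continuity of $g$ at $a$ and at $b$ to pick $\dl_1,\dl_2>0$ such that $d(x,a)<\dl_1$ forces $d(g(x),g(a))<\e/2$, and analogously at $b$. Then I choose $n$ so large that $d(a_n,a)<\min(\dl_1,\e/2)$ and $d(b_n,b)<\min(\dl_2,\e/2)$. Using the chain equivalence of $(a_n,b_n)$, I extract an $(\e/2,g)$-chain
\[
(a_n=y_1,y_2,\ldots,y_{k+1}=b_n;\,g_1,\ldots,g_k).
\]

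The candidate chain from $a$ to $b$ is obtained by replacing only the endpoints while retaining the interior nodes and all the maps:
\[
(a,y_2,y_3,\ldots,y_k,b;\,g_1,g_2,\ldots,g_k).
\]
The interior bounds $d(g_ig(y_i),y_{i+1})<\e/2<\e$ are inherited unchanged. The final step gives
\[
d(g_kg(y_k),b)\leq d(g_kg(y_k),b_n)+d(b_n,b)<\e/2+\e/2=\e.
\]
The opening step splits as
\[
d(g_1g(a),y_2)\leq d(g_1g(a),g_1g(a_n))+d(g_1g(a_n),y_2),
\]
with the second summand below $\e/2$ and the first handled via pointwise continuity of $g_1\circ g$ at $a$.

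The main obstacle is precisely this opening step: the continuity modulus of $g_1\circ g$ at $a$ depends on the particular $g_1\in\widehat G$ appearing in the extracted chain, so the threshold on $n$ cannot be fixed in advance of the chain. I would navigate this either by refining $n$ once the chain is in hand—exploiting pointwise continuity of the specific $g_1\circ g$ together with $a_n\to a$—or by showing that one may always pick a chain starting with $g_1=identity$, in which case $\dl_1$ already suffices. Running the symmetric argument with the roles of $a$ and $b$ swapped produces the chain from $b$ to $a$, concluding that $(a,b)\in CE(G)$.
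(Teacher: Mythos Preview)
Your overall strategy mirrors the paper's, but the obstacle you flag at the opening step is real and neither of your suggested workarounds closes it. Option~(a) is circular: having fixed $n$ and extracted a chain from $a_n$, you learn $g_1$; using continuity of $g_1\circ g$ at $a$ to produce a new threshold $\dl'$ and then passing to a later index $m$ with $d(a_m,a)<\dl'$ forces you to take a \emph{new} chain from $a_m$, which may well start with a different map $g_1'$, and the problem recurs. Option~(b) --- arranging $g_1=identity$ in an $(\e/2,g)$-chain --- is not available without further input, since prepending a link with the identity map requires control you do not have.

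The paper's device is to request an $(\e/2,g^2)$-chain $(x=x_1,\ldots,x_{n+1}=y;g_1,\ldots,g_n)$ from the nearby chain-equivalent pair $(x,y)$, rather than an $(\e/2,g)$-chain. One factor of $g$ is then absorbed into the points: the relations $d(g_i g^2(x_i),x_{i+1})<\e/2$ yield an $(\e,g)$-chain $(g(x),x_2,\ldots,x_n;\,g_1,g_2g,\ldots,g_{n-1}g)$ starting at $g(x)$. This is prefaced by the single link $(a,g(x);identity)$, which is a valid $(\e,g)$-step because $d(g(a),g(x))<\e$ follows from continuity of $g$ alone and $d(a,x)<\dl_1$. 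Thus the unknown $g_1$ never has to act on a point near $a$, and only the continuity modulus of the fixed map $g$ enters. The terminal link to $b$ is handled exactly as you propose, via the triangle inequality through $y$. Inserting this $g^2$ trick into your argument removes the gap and essentially reproduces the paper's proof.
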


\begin{proof}
 Suppose that $(a,b)$ is a limit point of $CE(G)$. Let $\e >0$ be given and some $g \in G$. First we shall construct an $(\e,g)$-chain from $a$ to $b$.

Since $g$ is continuous, there are $\dl_1,\dl_2>0$  such that if
$$d(x,a)<\dl_1 \ {\rm then} \ d(g(x),g(a))< \e$$
and
$$d(y,b)<\dl_2 \ {\rm then} \ d(g(y),g(b))< \e.$$

Take $\dl=min\{\dl_1,\dl_2,\e/2\}$. Since $(a,b)$ is a limit point of $CE(G)$, there exists a pair of chain equivalent point $(x,y)$ such that $d(x,a)<\dl$ and $d(y,b)<\dl$. Then $d(g(x),g(a))< \e$ and hence $(a,g(x);identity)$ is an $(\e,g)$-chain from $a$ to $g(x)$.

Since $(x,y)\in CE(G)$, there exists an $(\e/2,g^2)$-chain $(x=x_1,\ldots, x_{n+1}=y;g_1, \ldots, g_n)$ from $x$ to $y$. Then $(g(x),x_2\ldots, x_n;g_1,g_1g, \ldots, g_{n-1}g)$ is an $(\e,g)$-chain from $g(x)$ to $x_n$. 

Since
$$d(g_ng^2(x_n),b)<d(g_ng^2(x_n),y)+d(y,b)<\frac{\e}{2}+\frac{\e}{2}=\e,$$
we have, $(x_n,b;g_ng)$ is an $(\e,g)$-chain from $x_n$ to $b$.

By transitivity via concatinating these $(\e,g)$-chains, there is a $(\e,g)$-chain from $a$ to $b$.

Also interchanging the role of $a$ and $b$; and $x$ and $y$, on the similar line we can produce a $(\e,g)$-chain from $b$ to $a$. Therefore, the set of all chain equivalent  points for $G$  is closed.
\end{proof}

\begin{theorem}
The set of all chain recurrent  points for $G$  is a closed subset of $X$.
\end{theorem}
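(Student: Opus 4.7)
The plan is to adapt the argument of Theorem \ref{ce}, which in the present setting becomes somewhat simpler because the two endpoints of the chain coincide. Let $\hat{x}$ be a limit point of $CR(G)$ and fix $\e > 0$ and $g \in G$; I aim to produce an $(\e,g)$-chain from $\hat{x}$ to $\hat{x}$.

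First I would invoke continuity of $g$ at $\hat{x}$ to pick $\dl > 0$ such that $d(z,\hat{x}) < \dl$ implies $d(g(z),g(\hat{x})) < \e$, and then set $\dl' = \min\{\dl,\e/2\}$. Since $\hat{x}$ is a limit point of $CR(G)$, there exists $x \in CR(G)$ with $d(x,\hat{x}) < \dl'$, and by chain recurrence of $x$ there is an $(\e/2, g^2)$-chain $(x = x_1, x_2, \ldots, x_{n+1} = x;\, g_1, \ldots, g_n)$ from $x$ to itself.

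Next I would splice together three pieces to form an $(\e,g)$-chain from $\hat{x}$ to $\hat{x}$: a single-step bridge $(\hat{x}, g(x);\, \text{identity})$, valid by the continuity estimate $d(g(\hat{x}),g(x)) < \e$; a middle segment $(g(x), x_2, \ldots, x_n;\, g_1, g_2 g, \ldots, g_{n-1} g)$, whose defining inequalities follow from the $(\e/2,g^2)$-chain inequalities of the original chain after absorbing one factor of $g$ into each bridging map; and a final bridge $(x_n, \hat{x};\, g_n g)$, valid because
\[
d(g_n g^2(x_n), \hat{x}) \leq d(g_n g^2(x_n), x) + d(x, \hat{x}) < \tfrac{\e}{2} + \tfrac{\e}{2} = \e.
\]
Concatenating these three segments yields the required $(\e,g)$-chain from $\hat{x}$ to itself, so $\hat{x} \in CR(G)$.

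The only delicate point, which is really just bookkeeping, is matching the $(\e/2, g^2)$ condition of the internal chain to the required $(\e, g)$ condition of the combined chain while ensuring each composed map still lies in $\widehat{G}$; this is automatic because each $g_i \in \widehat{G}$ and $g \in G$, so $g_i g \in G \subset \widehat{G}$. No new idea beyond the proof of Theorem \ref{ce} is required, and the uniform $\e/2$ budget on each end is what forces the use of an internal $(\e/2, g^2)$-chain rather than an $(\e, g)$-chain.
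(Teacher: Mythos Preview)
Your argument is correct: the three-piece splice works exactly as in the proof of Theorem \ref{ce}, and the bookkeeping about $g_i g \in \widehat{G}$ is fine. (There is a harmless edge case when the internal chain has length $n=1$, where the ``middle segment'' degenerates and one should go directly from $g(x)$ to $\hat{x}$ via the map $g_1$; this is the same degeneracy implicit in the paper's proof of Theorem \ref{ce}.)

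However, your route differs from the paper's. The paper does \emph{not} redo the chain-splicing argument. Instead it observes that $CR(G)$ is the image, under the homeomorphism $\pi_1|_{\Dl}:\Dl\to X$, of the intersection $\Dl\cap CE(G)$, where $\Dl\subset X\times X$ is the diagonal. Since $\Dl$ is closed (as $X$ is metric, hence Hausdorff) and $CE(G)$ is closed by Theorem \ref{ce}, the intersection is closed in $\Dl$, and its homeomorphic image $CR(G)$ is closed in $X$. This is a two-line reduction that uses Theorem \ref{ce} as a black box. Your approach is more hands-on and self-contained---it would stand even without Theorem \ref{ce}---while the paper's approach is cleaner and makes explicit the conceptual point that chain recurrence is just chain equivalence restricted to the diagonal.
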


\begin{proof}
Since $X$ is a metric space the {\it diagonal} $\Dl=\{(x,x):x\in X\}$ is closed in $X\times X$. Also by theorem \ref {ce}, the set $CE(G)$ is closed, therefore, the set $\Dl \cap CE(G)$ is a closed subset of $\Dl$. Since the canonical projection map $\pi_1 : X \times X \to X$ restricted to $\Dl$ is a homeomorphism, it follows that the set $CR(G)=\pi_1|_{\Dl}(\Dl \cap CE(G))$  is a closed subset of $X$.
\end{proof}

\begin{theorem}
If $G$ is abelian then the chain recurrent set $CR(G)$ is invariant under $G$.
\end{theorem}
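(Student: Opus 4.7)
The plan is to reduce to showing that $g(x) \in CR(G)$ whenever $x \in CR(G)$ and $g \in \{g_\al\}_\al$ is a generator; invariance under all of $G$ then follows because every element of $G$ is a finite composition of generators. Fix such $x$ and $g$, an arbitrary $\e > 0$, and $h \in G$; the goal is to produce an $(\e, h)$-chain from $g(x)$ to itself.

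My strategy is to take a sufficiently fine chain from $x$ to $x$ for $h$ and push it forward by $g$, exploiting commutativity to slide $g$ past the functions in the chain. Concretely, first use continuity of $g$ to pick $\dl > 0$ such that $d(u,v) < \dl$ implies $d(g(u), g(v)) < \e$. Then invoke $x \in CR(G)$ with this $\dl$ and with $h$ to obtain a $(\dl, h)$-chain $(x = x_1, \ldots, x_{n+1} = x; g_1, \ldots, g_n)$ satisfying $d(g_i \circ h(x_i), x_{i+1}) < \dl$ for each $i$. The candidate $(\e, h)$-chain from $g(x)$ to $g(x)$ is then the image sequence $(g(x_1), \ldots, g(x_{n+1}); g_1, \ldots, g_n)$ with the same $g_i$.

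Verifying the chain condition is a short computation: using the abelian hypothesis to commute $g$ past $g_i \circ h$,
$$d\bigl(g_i \circ h(g(x_i)), g(x_{i+1})\bigr) = d\bigl(g(g_i \circ h(x_i)), g(x_{i+1})\bigr),$$
which is $< \e$ by the choice of $\dl$ applied to $d(g_i \circ h(x_i), x_{i+1}) < \dl$. The endpoints agree since $g(x_1) = g(x_{n+1}) = g(x)$, so the image sequence is indeed an $(\e, h)$-chain from $g(x)$ to itself, proving $g(x) \in CR(G)$.

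The main subtlety lies in the opening step: the implication $d(u,v) < \dl \Rightarrow d(g(u), g(v)) < \e$ is a uniform continuity statement on $g$, automatic when $X$ is a compact metric space (the classical setting used in Alongi--Nelson) but not in full generality. One could attempt a workaround by first extracting the finitely many points $x_{i+1}$ visited by a rough chain and then rechoosing $\dl$ as the minimum of the pointwise continuity moduli of $g$ at those points; however, refining the chain may change the points visited, so some additional structural hypothesis (such as compactness, or uniform continuity of the generators) is needed to close the loop without circularity.
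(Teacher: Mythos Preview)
Your pushforward idea is natural, but the gap you flag is real in this paper's setting: $X$ is only assumed to be a metric space, with no compactness and no uniform continuity of the generators. The paper's argument avoids uniform continuity entirely by a different construction that needs continuity of $g$ only at the single point $x$.

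The key change is in the test map: rather than a $(\dl,h)$-chain, the paper takes a $(\dl,\,g\circ h)$-chain $(x=x_1,\ldots,x_{n+1}=x;\,h_1,\ldots,h_n)$, so that $d(h_i\circ g\circ h(x_i),x_{i+1})<\dl\le\e$ for every $i$. The new chain then keeps the intermediate points $x_2,\ldots,x_n$ \emph{untouched} and only replaces the two endpoints by $g(x)$:
\[
\bigl(g(x),\,x_2,\ldots,x_n,\,g(x);\ h_1,\ h_2\circ g,\ \ldots,\ h_{n-1}\circ g,\ g\circ h_n\circ g\bigr).
\]
At the first step, commutativity gives $h_1\circ h(g(x))=h_1\circ g\circ h(x_1)$, already within $\dl\le\e$ of $x_2$. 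At intermediate steps the extra $g$ is absorbed into the function slot, so $(h_i\circ g)\circ h(x_i)=h_i\circ g\circ h(x_i)$ is within $\e$ of $x_{i+1}$ directly from the original chain. Only at the last step is $g$ actually applied: from $d(h_n\circ g\circ h(x_n),x)<\dl$ one deduces $d(g\circ h_n\circ g\circ h(x_n),g(x))<\e$, and here pointwise continuity of $g$ at $x$ is all that is used.

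In short, the trick is to shift the spare copy of $g$ into the test map and the function slots of the chain rather than into the chain's points; this localises the continuity requirement to $x$ alone. The paper's subsequent remark---that uniform continuity of the maps gives invariance of $CR(G)$ \emph{without} the abelian hypothesis---confirms that uniform continuity is being treated as a genuinely separate assumption, not one available in the present theorem.
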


\begin{proof}
Let $x \in CR(G)$ and $g$ be any generator of $G$.  Let $\e >0$ be given and some $h \in G$. We shall construct an $(\e,h)$-chain from $g(x)$ to itself and it will follow that $g(x)\in CR(G)$.

Since $g$ is continuous, there exists a $0<\dl \leq \e$ such that if $d(x,y)< \dl \ {\rm then}\  d(g(x),g(y)) < \e.$  Also $x \in CR(G)$ implies there is a $(\dl, g\circ h)$-chain $(x=x_1,\ldots , x_{n+1}=x;h_1, \ldots ,h_n)$ from $x$ to itself. That is, for each $i=1, \ldots ,n$, we have,
$$d(h_i \circ g \circ h(x_i),x_{i+1})<\dl \leq \e.$$

In particular, since $G$ is abelian, we have,
$$d(h_1 \circ h \circ g(x_1),x_2)=d(h_1 \circ g \circ h(x_1),x_2)<\dl \leq \e.$$

Also by the continuity of $g$, we have,
$$d(h_n \circ g \circ h(x_n),x)<\dl \ {\rm implies} \ d(g \circ h_n \circ g \circ h(x_n),g(x))<\e.$$

Hence $(g(x),x_2,\ldots , x_n,g(x);h_1,h_2 \circ g, \ldots,h_{n-1}\circ g ,g \circ h_n\circ g)$ is  an $(\e,h)$-chain from $g(x)$ to itself.
\end{proof}

\begin{theorem}
Let $(X,G)$ and $(Y,\ti{G})$ be two dynamical systems on the metric spaces $(X,d_X)$ and $(Y,d_Y)$. If a uniform homeomorphism $\rho : X \to Y$ is a topological conjugacy then $\rho(CR(G))=CR(\ti{G})$.
\end{theorem}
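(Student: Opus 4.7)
The strategy is to push $(\e,g)$-chains back and forth between $X$ and $Y$ through the conjugacy $\rho$. The key observation is that while the conjugacy identity $\rho\circ g=\tilde g\circ \rho$ transports the dynamics algebraically, the tolerance $\e$ in a chain of arbitrary length must be controlled by a single modulus of continuity for $\rho$; this is precisely why \emph{uniform} homeomorphism, rather than mere homeomorphism, is assumed in the hypothesis.

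I would fix $x\in CR(G)$, $\tilde\e>0$, and $\tilde g\in\tilde G$, and construct a $(\tilde\e,\tilde g)$-chain from $\rho(x)$ to itself. Writing $\tilde g=\tilde g_{i_1}\circ\cdots\circ \tilde g_{i_m}$ and setting $g=g_{i_1}\circ\cdots\circ g_{i_m}\in G$, the computation already performed in the introduction yields $\rho\circ g=\tilde g\circ \rho$. The same correspondence turns each $g_i\in \widehat G$ appearing in a chain into a unique $\tilde g_i\in \widehat{\tilde G}$ (identity mapping to identity), with $\rho\circ g_i=\tilde g_i\circ \rho$. Using uniform continuity of $\rho$, pick $\e>0$ so that $d_X(u,v)<\e$ forces $d_Y(\rho(u),\rho(v))<\tilde\e$; since $x\in CR(G)$, there is an $(\e,g)$-chain $(x=x_1,\ldots,x_{n+1}=x;\,g_1,\ldots,g_n)$ from $x$ to itself.

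Applying $\rho$ pointwise and using conjugacy, I would observe
$$d_Y\bigl(\tilde g_i\circ \tilde g(\rho(x_i)),\,\rho(x_{i+1})\bigr)=d_Y\bigl(\rho(g_i\circ g(x_i)),\,\rho(x_{i+1})\bigr)<\tilde\e$$
for every $i$, so $(\rho(x),\rho(x_2),\ldots,\rho(x_n),\rho(x);\,\tilde g_1,\ldots,\tilde g_n)$ is the desired $(\tilde\e,\tilde g)$-chain, proving $\rho(CR(G))\subset CR(\tilde G)$. The reverse inclusion follows by running the identical argument for the uniform homeomorphism $\rho^{-1}:Y\to X$, which is a topological conjugacy from $(Y,\tilde G)$ onto $(X,G)$.

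\textbf{Main obstacle.} The only subtle point is that pointwise continuity of $\rho$ is insufficient: one needs a single $\e$ that controls every transition in the chain simultaneously, whereas the length $n$ of the chain is not bounded a priori. Uniform continuity of $\rho$ (and of $\rho^{-1}$) supplies exactly such a modulus, and this is the one place where the strengthened hypothesis genuinely enters. The algebraic bookkeeping $g\leftrightarrow \tilde g$ is routine, given the conjugacy identity for arbitrary words in the generators already recorded earlier in the paper.
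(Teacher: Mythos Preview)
Your proposal is correct and follows essentially the same route as the paper: use uniform continuity of $\rho$ to turn a $(\delta,g)$-chain in $X$ into an $(\tilde\e,\tilde g)$-chain in $Y$ via the conjugacy identity, then invoke $\rho^{-1}$ for the reverse inclusion. Your write-up is in fact slightly more careful than the paper's, in that you make explicit the correspondence $g_i\leftrightarrow\tilde g_i$ for the chain maps (including identity) and articulate why uniform, rather than pointwise, continuity is the crux.
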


\begin{proof}
Let $x \in CR(G)$.  Let $\e >0$ be given and some $\ti g \in \ti G$. First we shall construct an $(\e,\ti g)$-chain from $y=\rho(x)$ to itself. Let $g \in G$ be such that $\rho \circ g=\ti g \circ \rho$.

Since $\rho : X \to Y$ is uniformly continuous, there exists a $\dl >0$ such that if $d_X(x_1,x_2)< \dl$ then $d_Y(\rho(x_1),\rho(x_2))< \e$, for $x_1,x_2 \in X$.  Also $x \in CR(G)$ implies there is a $(\dl, g)$-chain $(x=x_1,\ldots , x_{n+1}=x;g_1, \ldots ,g_n)$ from $x$ to itself. That is, for each $i=1, \ldots ,n$, we have,
$$d_X(g_i\circ g(x_i),x_{i+1})<\dl .$$

For  each $i=1, \ldots ,n+1$ let $y_i=\rho(x_i)$. Since $d_X(g_i\circ g(x_i),x_{i+1})<\dl $, we have,
$$ d_Y(\ti g_i\circ \ti g(y_i),y_{i+1})=d_Y(\rho \circ g_i\circ g(x_i),\rho(x_{i+1})) <\e.$$

Thus $(\rho(x)=y_1,\ldots , y_{n+1}=\rho(x); \ti g_1, \ldots ,\ti g_n)$ from $\rho(x)$ to itself and hence $\rho(x)\in \rho(CR(G))$.  Therefore $\rho(CR(G)) \subset CR(\ti{G})$.

Conversely, let $y \in CR(\ti{G})$. Since $\rho^{-1}$ is a conjugacy from $Y$ to $X$, we have, $y \in \rho(CR(G))$. Thus, $\rho(CR(G))=CR(\ti{G})$.
\end{proof}

\begin{remark}
If $G$ consists of uniformly continuous maps then the chain recurrent set $CR(G)$ is invariant under $G$ without the assumption that $G$ is abelian.
\end{remark}

\begin{proposition}
The set of chain recurrent points on a metric space contains the set of strongly nonwandering points.
\end{proposition}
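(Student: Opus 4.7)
The plan is to exhibit an explicit two-step $(\e, g)$-chain from $x$ to itself for an arbitrary $\e > 0$ and $g \in G$. The crucial observation is that the strongly nonwandering property should be invoked not on $g$ but on $g^2 \in G$: this single choice accounts for both the outgoing step (one application of $g$ to leave a neighborhood of $x$) and the returning step (another application of $g$, composed with some $h \in \widehat G$, to come back).

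Concretely, I would first use continuity of $g$ at $x$ to choose $\dl_1 > 0$ with $d(y, x) < \dl_1$ implying $d(g(y), g(x)) < \e$, then set $\dl = \min\{\dl_1, \e\}$ and let $U = B(x, \dl)$. Applying the hypothesis $x \in \Om_S(G)$ to the neighborhood $U$ and the element $g^2 \in G$ (which lies in $G$ because $G$ is closed under composition) produces some $h \in \widehat G$ and a point $y \in U$ with $h \circ g^2(y) \in U$; thus both $y$ and $h \circ g^2(y)$ lie within distance $\dl$ of $x$.

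The candidate chain is $(x_1 = x,\ x_2 = g(y),\ x_3 = x;\ g_1 = \mathrm{id},\ g_2 = h)$, and both $\mathrm{id}$ and $h$ belong to $\widehat G$ as required. Verification reduces to two distance estimates: $d(g_1 g(x_1), x_2) = d(g(x), g(y)) < \e$ by continuity of $g$ at $x$ (since $d(y,x) < \dl \le \dl_1$); and $d(g_2 g(x_2), x_3) = d(h \circ g^2(y), x) < \dl \le \e$ since $h \circ g^2(y) \in U$. This shows $x \in CR(G)$.

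The main obstacle---really the key insight---is deciding what to feed into the strongly nonwandering property. Invoking it with $g$ alone only yields a pair $y,\, h \circ g(y) \in U$, which does not naturally close up into a chain beginning and ending at $x$; invoking it with $g^2$ instead pairs the departure $x \mapsto g(y)$ and the return $g(y) \mapsto h \circ g^2(y) \approx x$ into a single chain of length two, after which the remaining work is a routine continuity estimate.
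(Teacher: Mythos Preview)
Your proof is correct and matches the paper's argument almost verbatim: the paper also chooses $\dl<\e$ by continuity of $g$ at $x$, applies the strongly nonwandering hypothesis with $g^2$ on $B(x,\dl)$ to obtain $y$ and $h$, and exhibits the same two-step chain $(x,\,g(y),\,x;\ \mathrm{identity},\,h)$.
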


\begin{proof}
Let $(X,d)$ be a metric space and $x \in \Om_S(G)$. Let $g \in G$ and $\e >0$ be given. Since $g$ is continuous, there exists a $0 < \dl < \e$ such that $d(g(x),g(y))<\e$ whenever $d(x,y)<\dl$.

Since $x \in \Om_S(G)$, there exists some $h \in \widehat G$ such that $h \circ g^2(B(x,\dl))\cap B(x,\dl) \neq \emptyset$, where $B(x,\dl):=\{y:d(x,y)<\dl\}$. Therefore there exists $y \in  B(x,\dl)$ such that $ h \circ g^2(y) \in B(x,\dl)$. Since $d(x,y)<\dl$ we have $d(g(x),g(y))<\e$. Also  $ h \circ g^2(y) \in B(x,\dl)$ implies $d(hg^2(y),x)<\dl <\e$.

Thus, $(x,g(y),x;identity,h)$ is an $(\e,g)$-chain from $x$ to itself and hence $x \in CR(G)$.
\end{proof}

\begin{remark}
As compared  to the classical theory of dynamics, the orbit point and hence a nonwandering point need not be chain recurrent in the general setting of a semigroup.  In example \ref{ex}, one can see that  for $\e <1$ there does not exist any $(\e,g_1)$-chain from $p=-1$ to itself.
\end{remark}

{\bf Acknowledgment.}
I am thankful to my thesis adviser Sanjay Kumar for fruitful discussions.

\end{document}